\documentclass[reqno]{amsart}

\usepackage[utf8]{inputenc}
\usepackage[T1]{fontenc}

\usepackage{hyperref}
\usepackage{amsmath}
\usepackage{amssymb}
\usepackage{amsfonts}
\usepackage{graphicx}
\usepackage{amsthm}
\usepackage{enumerate}
\usepackage{lscape}
\usepackage{dsfont}
\usepackage{color}
\usepackage{mathtools}

\usepackage{setspace}
\onehalfspacing

\newcommand{\Q}{\mathbb{Q}}
\newcommand{\R}{\mathds{R}}                   
\newcommand{\Z}{\mathds{Z}}

\newcommand{\CP}{\mathds{C}\mathrm{P}}

\newcommand{\CH}{\mathds{C}\mathrm{H}}

\newcommand{\C}{\mathds{C}}            
\newcommand{\de}{\partial}          

\newcommand{\K}{K\"{a}hler}

\newcommand{\OO}{{\mathcal{O}}}

\newcommand{\ov}[1]{\overline{#1}}
\newcommand{\wi}[1]{\widetilde{#1}}

\newcommand{\deb}{\ov\partial}

\newcommand{\ngh}{neighbourhood}
\newcommand{\lmb}{\lambda}


\newcommand{\W}{\Omega}

\newcommand{\va}{\varphi}
\newcommand{\hs}{\hspace{0.1em}}

\newcommand{\FR}{{\mathfrak{R}}}
\newcommand{\FF}{{\mathfrak{F}}}
\newcommand{\CG}{{\mathcal{G}}}

\newcommand{\vf}{\varphi}

\newcommand{\mc}{{\mathcal C}}

\newcommand{\nash}{Nash algebraic holomorphic function}

\def\F{\mathcal F}

\newtheorem{theor}{Theorem}[section]

\newtheorem{lem}[theor]{Lemma}
\newtheorem{cor}{Corollary}

\newtheorem{remark}{Remark}

\begin{document}

\title[Rigidity properties of homogeneous K\"{a}hler manifolds ]{Rigidity properties of holomorphic isometries into  homogeneous K\"{a}hler manifolds}

\author{Andrea Loi}
\address{(Andrea Loi) Dipartimento di Matematica \\
         Universit\`a di Cagliari (Italy)}
         \email{loi@unica.it}

\author{Roberto Mossa}
\address{(Roberto Mossa) Dipartimento di Matematica \\
         Universit\`a di Cagliari (Italy)}
         \email{roberto.mossa@unica.it}

\thanks{
The authors are supported by INdAM and  GNSAGA - Gruppo Nazionale per le Strutture Algebriche, Geometriche e le loro Applicazioni, by GOACT - Funded by Fondazione di Sardegna and 
partially funded by PNRR e.INS Ecosystem of Innovation for Next Generation Sardinia (CUP F53C22000430001, codice MUR ECS00000038).}

\subjclass[2000]{53C55, 32Q15, 53C24, 53C42 .} 
\keywords{\K\ \ metrics, , \K-Einstein metrics; \K-Ricci solitons; homogeneous \K\ manifold; relatives \K\ manifolds; Rigidity properties.}

\begin{abstract}
We prove two  rigidity  results  on holomorphic isometries into  homogeneous \K\ manifolds.
The first  shows that a \K-Ricci soliton induced by the homogeneous metric of  the \K\ product of a special generalized flag manifold (i.e. a flag of classical type or integral type)  with a bounded homogeneous domain is trivial, i.e. \K-Einstein.
In the second one we prove that: (i)
a flat space is  not relative to the \K\ product  of a special generalized flag manifold   with a homogeneous bounded domain, (ii)
a special generalized flag manifold  is  not relative to the \K\ product  of a flat space with a homogeneous bounded domain
and  (iii)  a homogeneous bounded domain  is  not relative to the \K\ product  of  a flat space with a special generalized  flag manifold.
Our  theorems strongly  extend  the results   in   \cite{Cheng2021}, \cite{CDY},   \cite{LMpams}, \cite{PRIMO}  and \cite{UmearaC}.
\end{abstract}
 
\maketitle

\tableofcontents  

\section{Introduction}

 It is an interesting and classical  problem to study rigidity properties  of  holomorphic isometries (i.e. \K\ immersions) from a   \K\ manifold $(M, g)$ into another \K\ manifold $(S, g_S)$, namely those holomorphic maps
 $\varphi: M\rightarrow S$ such that $\varphi^*g_S=g$. The word {\em rigidity} has acquired several  different meanings in the mathematical literature. One can ideally subdivide the known rigidity phenomena  into three main directions, closely related to each other.
 The first  direction  tends to analyze when a  local holomorphic isometry $\varphi: U\rightarrow S$ extends to $M$ and its  unicity.
 For example Calabi   in his celebrated work \cite{Cal} (see also \cite{LoiZedda-book} and \cite{YUANYUAN}) proves that 
 a holomorphic isometry of a real analytic  \K\ manifold $(M, g)$ into a finite or infinite dimensional complex space form  $(S, g_S)$ is unique up to rigid motions (see also \cite{green} where it is shown that this is generically  true  for arbitrary real analytic \K\ manifolds) and the global extendability of a   local holomorphic isometry when $M$ is simply-connected. An interesting result in this  direction is due to Huang and Yuan in  \cite{HY} (see also \cite{HYbook}) where 
 they study some algebro-geometric conditions for the extendibility of  local holomorphic isometries between product of  Hermitian symmetric spaces of compact type. Regarding the unicity of holomorphic isometries between bounded symmetric domains  most of the work have been done by Mok  (the reader is referred  to the survey paper of Mok \cite{MOK}  and reference therein).

 The second direction deals with the following general question: {\em if the \K\ metric  $g_S$ on $S$  is homogeneous,
what can be said on the  \K\ metric $g$ on $M$  if one assumes that it is {\em canonical} in a broad sense, e.g.  it is \K-Einstein (KE) or more generally it has constant scalar curvature or it is extremal or  it is a  \K-Ricci soliton (KRS)?}

The case of holomorphic isometries of a \K-Einstein manifold $(M, g)$ into a finite or infinite dimensional complex space form  has attracted the interest of many mathematicians 
by becoming a subject on its own right (the reader is referred to Chapter 4  in \cite{LoiZedda-book}  for an update material on the subject).  For the case of holomorphic isometries of a complex manifold equipped with an  extremal metric
 into finite or infinite dimensional 
complex space forms the reader is referred to \cite{LSZext}.

Regarding   KRS  the authors of the present paper prove  the following (see also \cite{LSZkrs}):

\vskip 0.3cm

\noindent
{\bf Theorem A} (\cite{LMpams} and \cite[(i) of Theorem 1.1]{PRIMO})
{\em  If  $(g, X)$ is a KRS on a complex manifold $M$ ($g$ is a \K\ metric and $X$ is the solitonic vector field) 
and $(M, g)$ admits a holomorphic isometry into either  a finite  dimensional (definite or indefinite) complex space form or a  homogeneous bounded  domain,   then the soliton is trivial, i.e. $g$ is KE.}

\vskip 0.3cm

 Recall that a  homogeneous bounded domain is a complex  bounded domain $\Omega\subset \C^n$ equipped with a homogeneous 
\K\ metric $g_\Omega$. 
Notice that  bounded symmetric domains with the Bergman metric
are  a very special examples  of homogeneous bounded domains and 
that it could exist many  homogeneous  \K\ metrics  on a  bounded  domain   different from the Bergman metric (see \cite{SIL} for details).

The  third direction studies the obstruction of  the existence of  holomorphic isometries of a \K\ manifold $(M, g)$ (of positive dimension)  into two given \K\ manifolds $(S, g_S)$ and $(S', g_{S'})$.
If two such holomorphic isometries  exist,  $(S, g_S)$ and $(S', g_{S'})$ are said to be {\em relatives} (\cite{UmearaC} and \cite{diloi}).
For update results on relatives \K\  manifolds  the reader is referred  the survey paper \cite{YUANYUAN} 
(see also the recent preprint \cite{LMblowup}). Recently the authors of this paper  extend the result in \cite{Cheng2021} for bounded symmetric domains to arbirtary
homogeneous domain, by proving the following:

\vskip 0.3cm

\noindent
{\bf Theorem B} (\cite[(ii) of Theorem 1.1]{PRIMO})
{\em A homogeneous bounded domain and the (definite or indefinite) complex Euclidean space are not relatives.}

\vskip 0.3cm

In this paper we address the problem of   extending  Theorem A and Theorem B  to arbitrary  homogeneous \K\ manifolds,
i.e. those  \K\ manifolds which are acted upon transitively by their biholomorphic isometry group. 

Besides the homogeneous bounded domains, important examples of homogeneous \K\ manifolds are the flat ones and the generalized flag manifolds, 
the later being compact and simply-connected \K\  homogeneous manifolds.
In this paper  we  restrict to the class of  {\em special generalized flag manifolds} defined as follows. 

\vskip 0.3cm

{\bf Definition.}
{\em A generalized  flag manifold $(\mathcal C, g_{\mathcal C})$ is said to be {\em special} if it   satisfies one of the following conditions.
\begin{itemize}
\item[(i)]
${\mathcal C}$ is of classical type;
\item[(ii)]
${\mathcal C}$ is  of integral type, 
namely
there exists  a positive real number $\lambda\in\R^+$ such that $\lambda\omega_{\mathcal C}$ is integral 
(i.e. $\lambda\omega_{\mathcal C} \in H^2({\mathcal C}, \Z)$),
where  $\omega_{\mathcal C}$ denotes the \K\ form  associated to $g_{\mathcal C}$.
\end{itemize}  }

\vskip 0.3cm

All the generalized flag manifolds with second Betti number equals to one are examples of generalized flag manifold of integral type.
In particular all the Hermitian symmetric spaces of compact type with any homogeneous \K\ metric are generalized flag manifolds of integral type.
Other examples of generalized  flag manifolds of integral type are the KE ones
since in this case the homogeneous \K\ form is integral being  the first Chern class of the anticanonical bundle.
Finally, one can construct examples of generalized flag manifolds of classical type but not of integral type  by taking  the \K\  product 
$(\mathcal C\times \mathcal C, \sqrt{2}g_{\mathcal C}\times g_{\mathcal C})$, where $(\mathcal C, g_{\mathcal C})$ is a generalized flag manifold
of  both classical and  integral type.

The following theorems (and their corollaries) represent our main  results.

\begin{theor}\label{mainteor1}
Let $(M, g)$ be a \K\ manifold which admits a holomorphic isometry into the \K\ product 
${\mathcal C}\times \Omega$ 
of a special generalized flag manifold ${\mathcal C}$
and a homogeneous bounded domain $\Omega$.
Then any  KRS on $(M, g)$ is trivial, i.e. $g$ is KE.
\end{theor}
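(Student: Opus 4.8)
\emph{Proof proposal.}
The plan is to adapt the diastasis technique of \cite{LMpams} and \cite{PRIMO}: encode the soliton condition in an identity for Calabi's diastasis of $g$ and then exploit the rigid shape this diastasis must have because $(M,g)$ lies inside $\mathcal C\times\Omega$. It suffices to prove that the Ricci form $\rho_\omega$ of $\omega$ equals $\lambda\omega$, $\lambda\in\R$ being the soliton constant. Since $(M,g)$ admits a holomorphic isometry into the real-analytic \K\ manifold $\mathcal C\times\Omega$, the metric $g$ is real-analytic; fix $p_0\in M$ and let $D=D_g(p_0,\cdot)$ be the diastasis of $g$ at $p_0$, a distinguished local \K\ potential for $\omega$. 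Near $p_0$ the soliton equation reads $\rho_\omega-\lambda\omega=i\partial\deb f$ for a real-analytic local soliton potential $f$ (unique up to an additive pluriharmonic function), and since $\rho_\omega=-i\partial\deb\log\det(\partial_i\partial_{\bar\jmath}D)$ this becomes: the function
\[
\log\det(\partial_i\partial_{\bar\jmath}D)+\lambda D+f
\]
is pluriharmonic near $p_0$. By real-analyticity it is then enough to show that this forces $i\partial\deb f=0$ at $p_0$.

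Next I would pin down the shape of $D$. The diastasis is additive on \K\ products and hereditary under holomorphic isometries, so $D=\varphi_{\mathcal C}^{*}D_{\mathcal C}+\varphi_\Omega^{*}D_\Omega$, where $\varphi_{\mathcal C},\varphi_\Omega$ are the components of the holomorphic isometry and $D_{\mathcal C},D_\Omega$ the diastasis functions of $\mathcal C$ and $\Omega$. Here is where the hypothesis that $\mathcal C$ is \emph{special} enters. If $\mathcal C$ is of integral type, the Borel--Weil theorem (the homogeneous line bundle with $c_1=\lambda_0[\omega_{\mathcal C}]$ being very ample) realizes $(\mathcal C,g_{\mathcal C})$ as a \K\ submanifold of $(\CP^{N_1},\tfrac1{\lambda_0}\omega_{\FS})$ with $N_1<\infty$, so that $e^{\lambda_0\varphi_{\mathcal C}^{*}D_{\mathcal C}}-1$ is a finite sum of squares of holomorphic functions near $p_0$. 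If $\mathcal C$ is of classical type one argues similarly, using instead the explicit information on the homogeneous metric and on $D_{\mathcal C}$ available for the classical root systems. For the factor $\Omega$ one invokes the specific features of the diastasis of homogeneous bounded domains underlying Theorem~A(ii) (and Theorem~B): the holomorphic data building $\varphi_\Omega^{*}D_\Omega$ are controlled, which is the incarnation here of the boundedness of $\Omega$.

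Finally I would substitute $D=\varphi_{\mathcal C}^{*}D_{\mathcal C}+\varphi_\Omega^{*}D_\Omega$ into the displayed pluriharmonicity condition, compute $\det(\partial_i\partial_{\bar\jmath}D)$, and compare Taylor expansions at $p_0$. Using that $D$ has no pure (holomorphic or antiholomorphic) terms, that the compact factor $\mathcal C$ contributes nonnegative coefficients, and that the bounded factor $\Omega$ contributes the control just mentioned, one should be forced to conclude that the real-analytic function $f$ is pluriharmonic near $p_0$, hence $i\partial\deb f=0$. The step I expect to be the main obstacle is precisely this last one. For a single complex space form it is the content of \cite{LMpams}; what is genuinely new is that $\mathcal C\times\Omega$ is a \emph{product}, so $\det(\partial_i\partial_{\bar\jmath}D)$ is the determinant of a sum of two semipositive Hermitian matrices --- one of ``projective'' type coming from $\mathcal C$, one of ``bounded-domain'' type coming from $\Omega$ --- which does not factor, and the positivity extracted from the compactness of $\mathcal C$ and the control extracted from $\Omega$ pull the estimate in opposite directions.

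To overcome this I would restrict to germs of holomorphic curves through $p_0$: on a one-dimensional source, $\det(\partial_i\partial_{\bar\jmath}D)$ is the single function $\partial^2 D/\partial z\,\partial\bar z$, and the pluriharmonicity condition becomes an explicit scalar identity among $\varphi_{\mathcal C},\varphi_\Omega$, their derivatives, and $f$, to which the one-dimensional arguments of \cite{LMpams} and \cite{PRIMO} can be adapted; proving in this way that $f$ is constant along every such curve and then letting the curve vary would give $i\partial\deb f\equiv 0$ on $M$, i.e.\ $\rho_\omega=\lambda\omega$ and $g$ is \K-Einstein. Dealing with possibly infinite-dimensional ambient factors and keeping the classical-root-system combinatorics explicit in the scalar identity are the points I expect to require the most work.
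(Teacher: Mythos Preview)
Your setup matches the paper exactly through the point where you write $D=\varphi_{\mathcal C}^{*}D_{\mathcal C}+\varphi_\Omega^{*}D_\Omega$ via the hereditary and product properties of the diastasis, and your description of the structure of $D_{\mathcal C}$ and $D_\Omega$ is essentially the content of the paper's Lemmas~\ref{lemdiastflag} and~\ref{lembis}. From there, however, the paper does \emph{not} analyse $\log\det(\partial_i\partial_{\bar\jmath}D)$ by hand. The two lemmas give the single conclusion
\[
e^{D^g_p}\in \wi{\mathcal F}^{c_1}\cdots\wi{\mathcal F}^{c_s}\,\wi{\mathcal F}^{\gamma_1}\cdots\wi{\mathcal F}^{\gamma_r},
\]
and the paper then invokes \cite[Proposition~4.1]{PRIMO}, which says precisely that whenever $e^{D^g_p}$ lies in such a product class, any KRS on $(M,g)$ is trivial. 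That proposition is the black box absorbing all the work you flag as the ``main obstacle''; its mechanism is transcendence/Nash-algebraic (the same circle of ideas as in Section~\ref{secnash}), not a sign or positivity comparison between the compact and bounded factors.

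Your proposed workaround of restricting to holomorphic curves has a genuine gap. The pluriharmonicity condition involves $\log\det(\partial_i\partial_{\bar\jmath}D)$, the Ricci potential of the \emph{ambient} metric on $M$. Pulling back to a curve $\gamma:U\to M$ gives $\gamma^*\omega$, whose density is $\partial_z\partial_{\bar z}(D\circ\gamma)$, but this is \emph{not} the restriction of $\det(\partial_i\partial_{\bar\jmath}D)$ and $\gamma^*\rho_\omega$ is not the Ricci form of $\gamma^*\omega$; so the displayed identity does not reduce to a scalar identity of the type handled in \cite{LMpams}. In short: keep your first two paragraphs, replace the last two by the observation that $e^{D}$ is a product of real powers of diastasis-type Nash data, and cite \cite[Proposition~4.1]{PRIMO}.
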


By combining  this theorem  with Theorem A we immeditaly gets:

\begin{cor}\label{maincorol1}
Let  $(M, g)$ be \K\ manifold which  admits a holomorphic isometry into either a  (definite or indefinite) flat space ${\mathcal E}$, a special generalized flag 
manifold ${\mathcal C}$ or a homogeneous bounded domain $\Omega$. Then any KRS on  $(M, g)$ is trivial.
\end{cor}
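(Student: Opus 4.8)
The plan is to deduce Corollary~\ref{maincorol1} directly from Theorem~\ref{mainteor1} and Theorem~A by distinguishing the three possible targets of the holomorphic isometry and, in each case, producing from it a holomorphic isometry into a \K\ manifold to which one of the two theorems already applies. The only extra ingredient is the elementary observation that, for \K\ manifolds $(N_1,h_1)$, $(N_2,h_2)$ and points $p_i\in N_i$, each slice inclusion $N_1\times\{p_2\}\hookrightarrow(N_1\times N_2,\,h_1\oplus h_2)$ and $\{p_1\}\times N_2\hookrightarrow(N_1\times N_2,\,h_1\oplus h_2)$ is a holomorphic isometry, because the product metric restricts to $h_1\oplus 0$, respectively $0\oplus h_2$, on the slice. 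Hence if $\varphi:M\to N_1$ is a holomorphic isometry, so is $x\mapsto(\varphi(x),p_2):M\to N_1\times N_2$, and symmetrically for the second factor.

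Granting this, I would argue by cases on the target, assuming $(g,X)$ is a KRS on $(M,g)$. If $(M,g)$ admits a holomorphic isometry into a (definite or indefinite) flat space $\mathcal{E}$, then $\mathcal{E}$ is a finite dimensional complex space form and Theorem~A gives at once that $(g,X)$ is trivial. If $(M,g)$ admits a holomorphic isometry into a homogeneous bounded domain $\Omega$, then Theorem~A again applies directly and $(g,X)$ is trivial. Finally, if $\varphi:M\to\mathcal{C}$ is a holomorphic isometry into a special generalized flag manifold $\mathcal{C}$, I would fix any homogeneous bounded domain $\Omega_0$ --- for instance the open unit disc in $\C$ with its Bergman metric, a bounded symmetric domain and thus a homogeneous bounded domain --- together with a point $q\in\Omega_0$. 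By the first paragraph $x\mapsto(\varphi(x),q)$ is then a holomorphic isometry of $(M,g)$ into the \K\ product $\mathcal{C}\times\Omega_0$ of a special generalized flag manifold and a homogeneous bounded domain, so Theorem~\ref{mainteor1} applies and $(g,X)$ is trivial. The three cases exhaust the hypothesis, whence the corollary.

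I do not expect any genuine obstacle here: once Theorem~\ref{mainteor1} is in hand the argument is pure bookkeeping, the only point deserving a word being that the factor on which the map $x\mapsto(\varphi(x),q)$ is constant contributes nothing to the pulled-back metric, so that this map is a holomorphic \emph{isometry} and not merely a holomorphic immersion. It is however worth stressing that Theorem~A alone does \emph{not} cover the flag case: a special generalized flag manifold is compact with positive first Chern class, hence is not biholomorphic to a bounded domain, and in general (for example a Grassmannian or a full flag manifold) it is not a complex space form either --- and it is exactly this case that Theorem~\ref{mainteor1} contributes.
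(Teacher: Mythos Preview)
Your argument is correct and is exactly the combination the paper has in mind: the paper states only that the corollary follows immediately from Theorem~\ref{mainteor1} together with Theorem~A, and you have supplied the one missing sentence, namely that a holomorphic isometry $\varphi:M\to\mathcal{C}$ yields a holomorphic isometry $x\mapsto(\varphi(x),q)$ into $\mathcal{C}\times\Omega_0$ for any chosen homogeneous bounded domain $\Omega_0$, so that Theorem~\ref{mainteor1} applies. Your remark that Theorem~A alone does not cover the flag case is also consistent with Remark~\ref{remarproj}, which handles only flags of integral type via an embedding into $\CP^N$.
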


\begin{remark}\rm
Theorem \ref{mainteor1}
cannot be extended to the \K\ product of a  flat space with  either a special
generalized  flag manifold 
or a homogeneous bounded domain. Indeed,  one can easily exhibit non-trivial KRS 
on $\C\times\C P^1$
(where $\CP^1$ is the complex  one-dimensional complex projective space with the Fubini-Study metric $g_{FS}$) 
and on $\C\times\C H^1$    (where $\C H^1$  is the complex one-dimensional  hyperbolic space
with the hyperbolic metric $g_{hyp}$).
Indeed, one can easily verify that the vector fleld $X$ on $\C\times \C P^1$ (resp. on $\C\times \CH^1$)
defined by  $X(z,q):= -2\left(z\frac{\de}{\de z} +  \ov z\frac{\de}{\de\ov z}\right)$ 
(resp. $X(z,q):= 2\left(z\frac{\de}{\de z} +  \ov z\frac{\de}{\de\ov z}\right)$
is a solitonic vector field but the \K\ metric $g_{0}\oplus g_{FS}$ (resp. on $g_{0}\oplus g_{hyp}$) is not KE. 
\end{remark}

\begin{remark}\rm\label{remarproj}
Notice that if  $({\mathcal C}, g_{\mathcal C})$ 
is a generalized flag manifold of integral type with $\lambda \omega_{\mathcal C}$  integral,
then $({\mathcal C}, \lambda g_{\mathcal C})$ 
admits a holomorphic isometry into a  complex projective space $(\C P^N, g_{FS})$ (see. e.g. the proof of Theorem 1 in \cite{LZUDDASGW}).
Thus, in order  to prove  Corollary \ref{maincorol1} for a  generalized flag manifold of integral type  
one can assume that this flag manifold  is $(\C P^N, g_{FS})$
and then the triviality of  the KRS in Corollary \ref{maincorol1} can be deduced   by Theorem A above.
\end{remark}

\begin{theor}\label{mainteor2} 
Let $\mathcal E$, $\mathcal C$ and $\Omega$ be respectively  a  flat manifold, a  special generalized flag manifold and a homogeneous bounded domain.
Then the following facts hold true.
\begin{itemize}
\item [(i)]
$\mathcal E$ is not relative to the \K\ product  
$\mathcal C\times \Omega$; 
\item [(ii)]
$\mathcal C$ is not relative to the \K\ product  $\mathcal E\times\Omega$; 
\item [(iii)]
$\Omega$ is not relative to the \K\ product  $\mathcal E\times \mathcal C$. 
\end{itemize}
\end{theor}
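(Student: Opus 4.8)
The plan is to reduce all three statements to the analysis of the diastasis function (in the sense of Calabi) and the structure of the Kähler potentials of the ambient factors, exploiting that for a special generalized flag manifold $\mathcal C$ the potential is (after a suitable rescaling) essentially a polynomial-type expression controlled by a Fubini--Study embedding, while for a homogeneous bounded domain $\Omega$ the potential is real-analytic with a logarithmic singularity along the boundary and, crucially, admits a global diastasis that is nowhere polynomial. Throughout one uses that the diastasis of a Kähler product is the sum of the diastases of the factors, and that being relatives is inherited by restriction to open subsets and passes through the common submanifold $(M,g)$. The first reduction, already implicit in Remark \ref{remarproj}, is: if $\mathcal C$ is of integral type we may replace it by $(\CP^N, g_{FS})$, so that $\mathcal C\times\Omega$ becomes (a submanifold of) $\CP^N\times\Omega$; more generally, using that a flag of classical type admits a real-analytic Kähler immersion into a product of projective spaces with rescaled Fubini--Study metrics, one reduces to ambient spaces whose potentials are understood explicitly.

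For part (i), suppose a flat $\mathcal E$ and the product $\mathcal C\times\Omega$ share a common Kähler submanifold $(M,g)$ of positive dimension; we may assume $M$ is connected and, shrinking, a small ball. Since $\mathcal E$ is flat, the diastasis $D^{\mathcal E}$ pulled back to $M$ is $\abs{\cdot}^2$ in suitable coordinates, hence a polynomial; on the other side $D_M = D^{\mathcal C}\circ\varphi_{\mathcal C} + D^{\Omega}\circ\varphi_{\Omega}$. The key point is to show this forces $\varphi_\Omega$ to be constant: the diastasis of a homogeneous bounded domain, being equivalent near the boundary to a Bergman-type potential, cannot contribute a polynomial-plus-bounded piece unless it is constant — here I would invoke precisely the mechanism of Theorem B (\cite[(ii) of Theorem 1.1]{PRIMO}), namely that $\mathcal E$ and $\Omega$ are not relatives, after first absorbing the $\mathcal C$ factor. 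To absorb $\mathcal C$: since $\mathcal C$ (after rescaling, in the classical/integral case) sits isometrically in a finite product of projective spaces, and $\mathcal E\times\mathcal C$ would then be relative to a flat space times such a projective product; but a flat space is not relative to a projective space (Umehara \cite{UmearaC}, or the complex-space-form rigidity in \cite{LoiZedda-book}), and products are handled by the standard decomposition of the common submanifold. Running this carefully yields that the $\mathcal C$-component of any common submanifold is constant, and then Theorem B finishes (i).

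Parts (ii) and (iii) follow the same template with the roles permuted, and in each case the obstruction is the incompatibility of a polynomial (flat) diastasis with the transcendental behaviour of $D^{\mathcal C}$ and $D^{\Omega}$, together with the already-established non-relativeness pairs. For (ii): a common submanifold of $\mathcal C$ and $\mathcal E\times\Omega$ would, by projecting and using that a flag manifold is compact (so any holomorphic map from it to $\C^k$ is constant on the Albanese/flat part), force the $\mathcal E$-component to be constant; one is left with $\mathcal C$ relative to $\Omega$, which is excluded because a positively-curved compact piece cannot embed isometrically into a bounded domain (the pullback metric would be both positive and dominated by a hyperbolic-type metric — contradiction with compactness, e.g. via the fact that $\Omega$ carries no compact complex submanifolds). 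For (iii): a common submanifold of $\Omega$ and $\mathcal E\times\mathcal C$ again kills the $\mathcal E$-component by the non-relativeness of $\Omega$ and $\C^k$ (Theorem B) after absorbing $\mathcal C$ into a projective product as above, and then $\Omega$ relative to $\mathcal C$ is excluded as in (ii).

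The main obstacle I anticipate is the ``absorption of the flag factor'' step: for a flag of classical type that is not of integral type one does not have a single Fubini--Study embedding, only an immersion into a product with incommensurable rescalings, so one must argue that the diastasis of such a product is still ``projectively algebraic'' enough to be incompatible with a flat diastasis, and that the product-decomposition of common Kähler submanifolds (which requires a De Rham–type splitting argument at the level of holomorphic isometries, not just Riemannian ones) goes through. Making this splitting rigorous — showing that a common submanifold of $A\times B$ and $A'\times B'$ decomposes compatibly when the potentials of $A,A'$ and $B,B'$ live in ``disjoint'' function classes (polynomial vs.\ bounded-real-analytic vs.\ log-singular) — is where the real work lies, and I would isolate it as a lemma on the additivity and uniqueness of the Calabi diastasis before assembling the three parts.
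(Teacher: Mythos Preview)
Your proposal has genuine gaps, and they all stem from the same misconception: being \emph{relatives} is a purely \emph{local} notion. Two K\"ahler manifolds are relatives if there exist holomorphic isometries from a common $(M,g)$ of positive dimension into each; $M$ can (and in the proof should) be taken to be a small disc $U\subset\C$. Consequently:

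\begin{itemize}
\item In your argument for (ii), the step ``$\mathcal C$ is compact, so any holomorphic map from it to $\C^k$ is constant'' is inapplicable: the map goes from the disc $U$ to $\mathcal E$, not from $\mathcal C$. Likewise, your argument that $\mathcal C$ and $\Omega$ are not relatives because ``$\Omega$ carries no compact complex submanifolds'' fails for the same reason --- nobody is embedding $\mathcal C$ into $\Omega$, only a disc into each.
\item Your ``absorption/splitting'' strategy for (i) and (iii) --- decomposing a common submanifold of a product compatibly with the factors --- does not work in general. The paper gives an explicit counterexample right after the theorem: $(\CP^1,g_{FS})$ and $(\CP^1,2g_{FS})$ are not relatives, yet $(\CP^1\times\CP^1,g_{FS}\oplus g_{FS})$ and $(\CP^1,2g_{FS})$ are. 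So one cannot reduce to pairwise non-relativeness by projecting onto factors.
\end{itemize}

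What the paper actually does is entirely different and rests on the explicit analytic form of the diastasis. By Lemmata \ref{lemdiastflag} and \ref{lembis}, both $e^{D^{g_{\mathcal C}}}$ and $e^{D^{g_\Omega}}$ are (pulled back to $U$) finite products of elements of the class $\wi{\mathcal F}$ of Nash-algebraic diastasis-type functions, raised to real powers. For (i), the hereditary property gives $e^{D^{g_0}\circ\varphi_{\mathcal E}}\in \wi{\mathcal F}^{c_1}\cdots\wi{\mathcal F}^{c_s}\wi{\mathcal F}^{\gamma_1}\cdots\wi{\mathcal F}^{\gamma_r}$, and a transcendence result on Nash algebraic functions (\cite[Theorem 2.1]{PRIMO}) forces $\varphi_{\mathcal E}$ to be constant --- no splitting of factors is needed. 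The hard core of (ii)--(iii) is proving that $\mathcal C$ and $\Omega$ themselves are not relatives; this is done via the new technical Theorem \ref{proprelnash}, which exploits $\Q$-linear independence of the exponents $c_j,\gamma_k$ to force constancy, and then a clever replacement of the exponents $c_j$ by integers reduces to the known fact (\cite{Mossa}) that a projectively induced manifold and a homogeneous bounded domain are not relatives. Once that is established, the flat factor in (ii) and (iii) is again eliminated by the same transcendence argument as in (i). None of this machinery appears in your proposal, and the arguments you substitute for it (compactness, curvature sign, De Rham-type splitting) do not survive the local nature of the problem.
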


It is worth pointing out that it could exist  three \K\ manifolds $(M_1, g_1)$,  $(M_2, g_2)$  and $(M_3, g_3)$  such that 
$(M_1, g_1)$ is not relative to either  $(M_2, g_2)$ or  $(M_3, g_3)$ but it is relative to $(M_2\times M_3, g_2\oplus g_3)$. 
Therefore in Theorem  \ref{mainteor2}  one cannot restrict to a single factor of the \K\ product involved in order to achieve the conclusion.
For example the  one-dimensional complex projective spaces  $(\CP^1, g_{FS})$  and $(\CP^1, 2g_{FS})$ where $g_{FS}$
is the Fubini Study metric, are not relatives. Indeed, assume by contradiction that there exist an open set $U\subset \C$ and two holomorphic isometries $\varphi_1 :U\rightarrow \C P^1$ and $\varphi_2 :U\rightarrow \C P^1$ such that 
$\varphi_1^*g_{FS}=\varphi_2^*(2g_{FS})$. By restricting $U$ if necessary one can assume that this maps are injective and hence $\varphi=\varphi_1\circ\varphi_2^{-1}: \varphi (U)\rightarrow \CP^1$ would be 
a holomorphic map such that $\varphi^*g_{FS}=2g_{FS}$, in contrast with \cite[Theorem 13]{Cal}. 
Nevertheless, $(\C P^1\times \C P^1, g_{FS}\oplus g_{FS})$ and $(\CP^1, 2g_{FS}))$  are relatives (the holomorphic  map $\varphi: \C P^1\rightarrow \C P^1\times \C P^1, q\mapsto (q, q)$ 
satisfies $\varphi^*(g_{FS}\oplus g_{FS})=2g_{FS}$).

As a very special case of Theorem \ref{mainteor2} one gets the following appealing result which is  a strong  extension
of  the results in \cite{LMpams}, \cite{PRIMO}.

\begin{cor}\label{maincorol2}
Any two among a flat  space  ${\mathcal E}$, a special generalized flag  manifold ${\mathcal C}$ or a homogeneous bounded domain
 $\Omega$ are not relatives.
\end{cor}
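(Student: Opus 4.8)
The plan is to read Corollary \ref{maincorol2} off directly from Theorem \ref{mainteor2}. The corollary concerns the three unordered pairs $\{\mathcal E,\mathcal C\}$, $\{\mathcal E,\Omega\}$, $\{\mathcal C,\Omega\}$, and each non-relativeness assertion will follow from one of the items (i)--(iii) once we record the elementary remark that a single factor of a \K\ product sits inside the product as a ``relative copy'' of itself. Precisely, the first step is to observe that for any \K\ product $(N\times N',\, g_N\oplus g_{N'})$ and any fixed point $p_0\in N'$, the slice map $q\mapsto (q,p_0)$ is a holomorphic isometry of $(N,g_N)$ into $N\times N'$, since its pullback of $g_N\oplus g_{N'}$ equals $g_N$; symmetrically for the other factor, and a slice at a point of a factor is unaffected by whether that factor carries a definite or indefinite metric.

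Granting this, I would argue by contradiction for each pair. Suppose $\mathcal E$ and $\mathcal C$ were relatives: then there is a \K\ manifold $(M,g)$ of positive dimension together with holomorphic isometries $\varphi_1\colon M\to\mathcal E$ and $\varphi_2\colon M\to\mathcal C$. Composing $\varphi_2$ with a slice inclusion $\mathcal C\hookrightarrow \mathcal C\times\Omega$ produces a holomorphic isometry $M\to\mathcal C\times\Omega$, which together with $\varphi_1$ exhibits $\mathcal E$ as relative to $\mathcal C\times\Omega$, contradicting (i) of Theorem \ref{mainteor2}. The same mechanism disposes of the remaining pairs: composing instead with the slice $\Omega\hookrightarrow\mathcal C\times\Omega$ shows that ``$\mathcal E$ relative to $\Omega$'' would again contradict (i), while composing with $\Omega\hookrightarrow\mathcal E\times\Omega$ shows that ``$\mathcal C$ relative to $\Omega$'' would contradict (ii). (Items (ii) and (iii) may be used interchangeably for the last two pairs.) This exhausts the three pairs and yields the corollary.

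I do not expect any genuine obstacle in the corollary itself; all the substance is in Theorem \ref{mainteor2}. The only point needing a moment's care is that ``being relatives'' is defined through the existence of a common positive-dimensional source, so one must check that precomposing the given isometries with a slice inclusion keeps them holomorphic isometries and does not change $M$ — both are immediate, the former because the composition of holomorphic isometries is a holomorphic isometry. It is worth stressing that I am only invoking the trivial implication ``relative to a factor $\Rightarrow$ relative to the product''; the converse is false in general, as the pair $(\C P^1\times \C P^1,\, g_{FS}\oplus g_{FS})$ versus $(\C P^1,\, 2g_{FS})$ recalled in the Introduction shows, and this is precisely why Theorem \ref{mainteor2} is phrased in terms of \K\ products rather than single factors.
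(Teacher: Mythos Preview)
Your proposal is correct and matches the paper's intent: the paper simply declares Corollary~\ref{maincorol2} to be ``a very special case of Theorem~\ref{mainteor2}'' without further argument, and your slice-inclusion reasoning is exactly the natural way to make that precise. No gap.
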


\vskip 0.3cm
Since  each compact homogeneous KE  manifold is the Kahler product of a flat complex torus and generalized flag  KE manifold (see, e.g.  (\cite{Be}))
Theorem \ref{mainteor2} gives:

\begin{cor}
A  homogeneous bounded  domain is not relative to a compact KE homogeneous manifold. 
\end{cor}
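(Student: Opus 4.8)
The plan is to read the statement off part~(iii) of Theorem~\ref{mainteor2}, after describing the structure of a compact \K-Einstein homogeneous manifold and checking that its ``curved'' factor falls within the scope of that theorem.

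First I would recall (see \cite{Be}) that a compact \K-Einstein homogeneous manifold $P$ splits as a \K\ product $P = T \times \mathcal{C}$, where $T$ is a flat complex torus and $\mathcal{C}$ is a generalized flag \K-Einstein manifold. Such a $\mathcal{C}$ is automatically \emph{special}: as already observed just after the Definition in the Introduction, the \K\ form of a \K-Einstein generalized flag manifold is a positive multiple of the first Chern class of its anticanonical bundle, hence an integral class after rescaling, so $\mathcal{C}$ is of integral type.

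Next, suppose for contradiction that $\Omega$ and $P$ are relatives, i.e.\ there is a connected \K\ manifold $(M,g)$ of positive dimension together with holomorphic isometries $f \colon M \to \Omega$ and $h \colon M \to T \times \mathcal{C}$. Being relatives is a local property, so we may shrink $M$ to a simply connected open ball $B$; since $\mathcal{C}$ is simply connected, the universal cover of $T \times \mathcal{C}$ is $(\C^n, g_0) \times \mathcal{C}$ with $g_0$ the flat metric, and the restriction $h|_B$ lifts through this covering to a holomorphic isometry $\widetilde{h} \colon B \to (\C^n, g_0) \times \mathcal{C}$. Thus $(B, g|_B)$, of positive dimension, admits holomorphic isometries into both $\Omega$ and $\mathcal{E} \times \mathcal{C}$ with $\mathcal{E} = (\C^n, g_0)$; in other words $\Omega$ is relative to $\mathcal{E} \times \mathcal{C}$, contradicting part~(iii) of Theorem~\ref{mainteor2}. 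Hence $\Omega$ is not relative to $P$, as claimed.

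I do not expect a genuine obstacle here: the whole content is already packaged in Theorem~\ref{mainteor2}~(iii). The only two points deserving a word of care are the verification that the flag factor is \emph{special} --- immediate from the integral-type characterisation of \K-Einstein flag manifolds recorded in the Introduction --- and the replacement of the flat torus $T$ by the flat complex Euclidean space $(\C^n, g_0)$, which is harmless because relativeness is detected on arbitrarily small, hence simply connected, balls, on which $T$ is indistinguishable from $(\C^n, g_0)$.
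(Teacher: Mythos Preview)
Your proof is correct and follows the paper's approach: decompose the compact KE homogeneous manifold as a flat torus times a (necessarily special) generalized flag manifold, then invoke Theorem~\ref{mainteor2}~(iii). The lift to the universal cover is in fact unnecessary, since Theorem~\ref{mainteor2} is stated for an arbitrary flat \K\ manifold $\mathcal{E}$ and the torus $T$ already qualifies; the extra step is harmless, just redundant.
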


The proofs of  Theorem \ref{mainteor1} and of (i) of Theorem  \ref{mainteor2} are obtained by some results on KRS proved in \cite{LMpams} and \cite{PRIMO}
and on the structure of the  Calabi diastasis function of special generalized flag manifolds (Lemma \ref{lemdiastflag}) 
and of homogeneous bounded domains  (Lemma \ref{lembis}).
On the other hand the proof of (ii) and (iii) of Theorem \ref{mainteor2}  and in particular  the fact that a special generalized flag manifold is
not relative  to a homogeneous bounded domain, are    based on Theorem \ref{proprelnash} below,  a new  and technical result, 
dealing   with  trascendental properties  of holomorphic Nash algebraic functions.

The paper  contains two more sections. Section \ref{secnash} is dedicated to the proof of Theorem \ref{proprelnash}
while Section \ref{proofs} to the proofs of Theorems \ref{mainteor1} and \ref{mainteor2}.

\vskip 0.3cm 
The authors would like to thank Yuan Yuan for all interesting discussions and comments on \nash s.

\section{A technical result on holomorphic Nash algebraic functions}\label{secnash}
Let $\mathcal N^m$ be the set of real analytic functions $\xi : V\subset \C^m \to \R $ defined in a \ngh\ $V\subset \C^m$ of the origin, such that its real analytic extension $\tilde\xi (z, w)$ around $(0,0)\in V \times \operatorname {Conj} V$ is a holomorphic Nash algebraic function. We define
$$
\mathcal F= \left\{\xi \left(f_1,\dots,f_m\right) \mid \xi \in \mathcal N^m,\ f_j \in \OO_0,\ f_j(0)=0,\ j=1,\dots,m, \ m>0  \right\},
$$
where $\OO_0$ denotes the germ of holomorphic functions around $0\in \C$.
We set 
\begin{equation}\label{tildeF}
\wi {\mathcal F}= \left\{\xi \left(f_1,\dots,f_m\right) \in \mathcal F \mid \xi  \text{ is of diastasis-type},\ m>0 \right\}.
\end{equation}

We say (see also \cite{LMpams}) that  a real analytic function defined on a neighborhood   $U$ of   a point $p$
of a complex manifold $M$ is of  {\em diastasis-type} if 
in one (and hence any) coordinate system 
$\{z_1, \dots , z_n\}$ centered at $p$ its  expansion in $z$ and $\bar z$ 
does not contains non constant purely holomorphic or anti-holomorphic terms (i.e. of the form $z^{j}$ or $\bar{z}^{j}$ with  $j > 0$). 

Before proving Theorem \ref{proprelnash} we need a lemma.

\begin{lem}\label{lemnashexpr} Let $f_0, f_1,\dots, f_s$ be non zero \nash s, such that
\begin{equation}\label{eqnashexpr00}
f_0^{c_0}f_1^{c_1}\dots f_s^{c_s}=1, \qquad c_0, \dots, c_s \in \R.
\end{equation}
If  $\left\{c_0, \dots, c_s\right\}$ are linearly independent over $\Q$, then each $f_j$, $j=0,\dots ,s$ is a  constant function.
\end{lem}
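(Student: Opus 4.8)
The plan is to take the logarithm of the defining relation to turn the multiplicative identity into a linear one, and then exploit the arithmetic independence of the exponents together with the algebraic (Nash) nature of the functions. Concretely, each $f_j$ is a non-zero Nash algebraic holomorphic function on a neighbourhood of $0\in\C$; shrinking the neighbourhood and choosing branches, I would write $f_j = u_j^{m_j}$ where $m_j$ is the order of vanishing of $f_j$ at $0$ (possibly $0$) and $u_j$ is a non-vanishing Nash function, so that $\log u_j$ is a well-defined holomorphic (in fact Nash) function near $0$. Substituting into \eqref{eqnashexpr00} and taking logarithms gives, on a punctured neighbourhood,
\begin{equation*}
\Big(\sum_{j=0}^s c_j m_j\Big)\log z \;+\; \sum_{j=0}^s c_j \log u_j \;\equiv\; 2\pi i k
\end{equation*}
for some integer $k$, where the first sum collects the contributions $c_j\log f_j = c_j m_j\log z + c_j\log(u_j)$ after writing $f_j=z^{m_j}u_j$.

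The key step is to separate the $\log z$ term from the holomorphic remainder. Since $\sum c_j\log u_j$ extends holomorphically across $0$ while $\log z$ does not, the coefficient $\sum_{j=0}^s c_j m_j$ must vanish; but the $m_j$ are integers and the $c_j$ are linearly independent over $\Q$, so this forces $m_j = 0$ for every $j$. Hence every $f_j$ is a unit near $0$, and we are reduced to the relation $\sum_{j=0}^s c_j \log f_j \equiv \text{const}$ with each $\log f_j$ a holomorphic Nash function vanishing-adjusted near $0$. Now I would argue that each $\log f_j$ is itself constant: differentiating gives $\sum_j c_j (f_j'/f_j) \equiv 0$, a linear relation among the logarithmic derivatives $f_j'/f_j$, which are rational (Nash) functions.

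The main obstacle — and the place where the hypotheses must really be used — is upgrading "$\sum c_j \log f_j$ is constant" to "each $\log f_j$ is constant" using only $\Q$-linear independence of the $c_j$ and Nash algebraicity, since a priori the $\log f_j$ could conspire. Here I would invoke the algebraic/transcendence structure: each $f_j$ satisfies a polynomial equation $P_j(z, f_j)=0$ over $\C$, so $f_j$ lies in a finite extension of $\C(z)$, and $\log f_j$ is then (up to the constant $2\pi i\Z$ ambiguity) determined by an abelian integral of a rational $1$-form on the normalization of the curve $P_j=0$. A relation $\sum c_j \log f_j = \text{const}$ with $\Q$-linearly independent $c_j$ forces each logarithmic $1$-form $df_j/f_j$ to be exact as a rational form on a common algebraic cover — equivalently $f_j$ has no zeros or poles on that cover — which (combined with $f_j(0)\neq 0$, already established) pins down $f_j$ up to a constant; since $f_j$ is single-valued near $0$ and its logarithmic derivative is then forced to be $0$, $f_j$ is constant. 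An alternative, more self-contained route is to induct on $s$: pick the factor, say $f_s$, of maximal "complexity" and show that $c_s\log f_s$ cannot be cancelled by a $\C$-combination of the others unless $f_s$ is constant, using that any nontrivial $\C$-linear dependence among $\log f_0,\dots,\log f_s,1$ would, via the $\Q$-independence of the $c_j$, contradict the minimality of the polynomial relations $P_j$; then apply the inductive hypothesis to the remaining $s$ functions. I expect the clean write-up to follow the first route — kill the $\log z$ term by $\Q$-independence, then reduce to units and differentiate — with the transcendence input packaged as the statement that a nonconstant Nash function's logarithmic derivative is a nonzero rational function and these cannot satisfy a $\Q$-independent-coefficient linear relation forcing all of them to vanish.
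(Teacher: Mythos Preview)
Your first step is essentially vacuous: for the product $f_0^{c_0}\cdots f_s^{c_s}=1$ with real (non-integer) exponents to make sense as a single-valued holomorphic identity near the origin, the $f_j$ must already be nonvanishing there, so the orders $m_j$ are all zero from the start. The entire content of the lemma therefore sits in your ``main obstacle'' --- passing from $\sum_j c_j\log f_j=\text{const}$ to $f_j=\text{const}$ --- and neither of your two sketched routes actually closes this. The differentiated relation $\sum_j c_j(f_j'/f_j)=0$ is a $\C$-linear relation among algebraic functions, and $\Q$-independence of the $c_j$ does \emph{not} by itself force each term to vanish (e.g.\ $\sqrt 2\cdot z + (-1)\cdot(\sqrt 2\,z)=0$). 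Your abelian-integral paragraph gestures at the right extra structure (residues of $df_j/f_j$ are rational) but never extracts it.

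The paper's proof supplies exactly the missing idea, and it is much more elementary than the machinery you invoke. Since $f_0$ is a nonconstant algebraic function of one variable, it has a point $z_0\in\C\cup\{\infty\}$ where $f_0\to\infty$. Near $z_0$ each $f_\ell$ has a Puiseux expansion $f_\ell(z)=(1+o(1))\,d_\ell\,(z-z_0)^{k_\ell/N_\ell}$ with $k_\ell/N_\ell\in\Q$ and $k_0\neq 0$. Plugging into $\prod f_\ell^{c_\ell}=1$ and comparing leading orders gives the rational linear relation $\sum_\ell (k_\ell/N_\ell)\,c_\ell=0$ with a nonzero coefficient, contradicting the $\Q$-independence of the $c_\ell$. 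In other words, your own ``$\sum c_jm_j=0\Rightarrow m_j=0$'' argument is the whole proof --- but it has to be run at a pole of some $f_j$, not at the origin where all the $m_j$ are trivially zero. The Nash-algebraic hypothesis is used precisely to guarantee such a pole exists and that the local exponents there are rational.
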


\begin{proof}
Assume for example  that $f_0$ is not constant.  Let $z_0\in \C \cup \left\{\infty\right\}$ such that $\lim_{z\to z_0} f_0(z)=\infty$. 
 Without loss of generality we can assume $z_0\in \C$ otherwise we apply the following argument to the  \nash\ $\frac{1}{f_0}$.
 Moreover, up to a translation, we can also assume  that $z_0=0$.  
 
 By the Puiseux expansion we can write
$
f_\ell(z)=(1+o(1))d_\ell\hs z^{k_\ell/N_\ell}, 
$
where $d_\ell\in\C\setminus\left\{0\right\}$, $\frac{k_\ell}{N_\ell}\in \Q_{<0}$, $\ell=1,\ldots,s$.
 From \eqref{eqnashexpr00} we have that $\frac{k_0 c_0}{N_0}+\dots+\frac{k_s c_s}{N_s}=0$. Since $\lim_{z\to z_0} f_0(z)=\infty$, we see that $k_0$ and at least one of $k_1, \dots, k_s$ are not zero in contrast 
 with the linearly independence of $\left\{c_0, \dots, c_s \right\}$ over $\Q$.
\end{proof}

The following theorem is  the technical result needed  in the proof  of (ii) and (iii) of Theorem \ref{mainteor2}. It is a  generalization of \cite[Theorem 2.1 (iii)]{CDY} (even for $s=1$). 

\begin{theor}\label{proprelnash}
Let $\psi_\ell=\xi_\ell\left(f_{\ell,1},\dots,f_{\ell,m_\ell}\right) \in \wi {\mathcal F}$, $\ell=0,1,\dots,s$, be such that
\begin{equation}\label{eqnashexpr}
\psi_1^{c_1}\dots \psi_s^{c_s}=\psi_0, \quad  \psi_0(0)\neq 0,  \quad c_1, \dots, c_s \in \R.
\end{equation}
If  $\left\{c_1, \dots, c_s, 1\right\}$ are linearly independent over $\Q$, then $\psi_1, \dots, \psi_s$ are constant functions.
\end{theor}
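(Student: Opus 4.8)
The plan is to polarize the identity and then run a valuation (Puiseux) computation in the spirit of Lemma~\ref{lemnashexpr}, letting the $\Q$-linear independence of $\{c_1,\dots,c_s,1\}$ supply the contradiction. First some reductions. Evaluating \eqref{eqnashexpr} at $0$ gives $\psi_1(0)^{c_1}\cdots\psi_s(0)^{c_s}=\psi_0(0)\neq 0$, and since each $c_\ell$ is irrational (it belongs to a $\Q$-linearly independent set containing $1$), comparing orders of vanishing at $0$ and invoking this $\Q$-linear independence forces every $\psi_\ell(0)\neq 0$; after shrinking the neighbourhood each $\psi_\ell$ is nowhere zero and \eqref{eqnashexpr} is an identity of positive real analytic functions. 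Writing $\psi_\ell=\xi_\ell(f_{\ell,1},\dots,f_{\ell,m_\ell})$ with $\xi_\ell\in\mathcal N^{m_\ell}$, its real analytic extension is
\[
\wi\psi_\ell(z,w)=\tilde\xi_\ell\bigl(f_{\ell,1}(z),\dots,f_{\ell,m_\ell}(z),\ov{f_{\ell,1}}(w),\dots,\ov{f_{\ell,m_\ell}}(w)\bigr),\qquad \ov{f}(w):=\ov{f(\bar w)},
\]
with $\tilde\xi_\ell$ holomorphic Nash algebraic near the origin; hence $\wi\psi_1^{c_1}\cdots\wi\psi_s^{c_s}=\wi\psi_0$ on a neighbourhood of $(0,0)$ in $\C^2$, with each $\wi\psi_\ell$ holomorphic and nowhere zero there. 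Finally, the diastasis-type hypothesis on the $\xi_\ell$ is equivalent to $\wi\psi_\ell(z,0)=\wi\psi_\ell(0,w)=\psi_\ell(0)$, which keeps the coordinate slices trivial.

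Now suppose, for contradiction, that some $\psi_{\ell_0}$ with $\ell_0\geq 1$ is non-constant; then $\wi\psi_{\ell_0}$ is non-constant and, by the slice identities, non-constant in $z$ for generic $w$. Fix a generic small $w_0\neq 0$ and set $G_\ell(\mathbf u):=\tilde\xi_\ell\bigl(\mathbf u,\ov{f_{\ell,1}}(w_0),\dots,\ov{f_{\ell,m_\ell}}(w_0)\bigr)$, again holomorphic Nash algebraic, so that $z\mapsto\wi\psi_\ell(z,w_0)=G_\ell\bigl(f_{\ell,1}(z),\dots,f_{\ell,m_\ell}(z)\bigr)$ and $\prod_{\ell=1}^{s}\wi\psi_\ell(z,w_0)^{c_\ell}=\wi\psi_0(z,w_0)$. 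Since each $G_\ell$ is a branch of an algebraic function, its non-regular locus is an algebraic hypersurface, and one analytically continues the identity in $z$ along a path until reaching a point $z_*$ at which some $\wi\psi_\ell(\cdot,w_0)$ acquires a genuine zero or pole. At $z_*$ every factor has a Puiseux expansion $\wi\psi_\ell(z,w_0)=(1+o(1))\,d_\ell\,(z-z_*)^{\rho_\ell}$ with $d_\ell\in\C\setminus\{0\}$ and $\rho_\ell\in\Q$ — rational because composing the algebraic $G_\ell$ with the holomorphic germ $z\mapsto(f_{\ell,1}(z),\dots,f_{\ell,m_\ell}(z))$ only multiplies the (rational) Puiseux exponent of $G_\ell$ along the hypersurface it meets by the finite order of vanishing of that composition. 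Comparing leading terms gives $\sum_{\ell=1}^{s}c_\ell\rho_\ell=\rho_0$, a $\Q$-linear relation among $c_1,\dots,c_s,1$; by hypothesis all $\rho_\ell$ (and $\rho_0$) must vanish, contradicting the choice of $z_*$. Hence $\psi_1,\dots,\psi_s$ are constant.

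The step I expect to be the main obstacle is exactly the passage from the algebraic $\tilde\xi_\ell$ to the $\wi\psi_\ell$ obtained by substituting the \emph{transcendental} germs $f_{\ell,j}$. Two points need care: (a) one must guarantee that a zero or pole is genuinely reached — the potential obstruction, e.g.\ $\wi\psi_\ell(z,w_0)$ behaving like $e^{\lambda z}$ with no zeros or poles, is excluded because the identity must hold for \emph{all} $w$ and each $\wi\psi_\ell$ has the rigid diastasis-type shape $\tilde\xi_\ell(\mathbf f_\ell(z),\ov{\mathbf f_\ell}(w))$, so it is here that the genuinely two-variable structure, rather than a single one-variable slice, is used; (b) non-transverse intersections of the analytic curve $z\mapsto(f_{\ell,1}(z),\dots,f_{\ell,m_\ell}(z))$ with the singular loci of the $G_\ell$ must be controlled so the Puiseux exponents stay rational, which a generic choice of $w_0$ and of the continuation path should ensure. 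For $s=1$ the argument collapses to the case $\wi\psi_1^{c_1}=\wi\psi_0$ with $c_1$ irrational and recovers and strengthens \cite[Theorem~2.1~(iii)]{CDY}. (Once $\psi_1,\dots,\psi_s$ are constant, \eqref{eqnashexpr} forces $\psi_0$ constant as well, though this is not part of the assertion.)
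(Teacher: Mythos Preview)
There is a genuine gap at exactly the point you flag as concern~(a). After fixing $w_0$ and writing $\wi\psi_\ell(z,w_0)=G_\ell\bigl(\mathbf f_\ell(z)\bigr)$ with $G_\ell$ Nash algebraic, you propose to analytically continue in $z$ until a zero or pole is reached. But the $f_{\ell,j}$ are only germs in $\OO_0$: they may have arbitrarily small radius of convergence, and even where they extend, the image of $z\mapsto\mathbf f_\ell(z)$ need not meet the zero/pole locus of $G_\ell$ at all. Nothing in the diastasis-type shape $\tilde\xi_\ell(\mathbf f_\ell(z),\ov{\mathbf f_\ell}(w))$ forces such a meeting for \emph{any} $w_0$; the two-variable structure tells you that the coordinate slices are constant, not that some $z$-slice develops a singularity. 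A composition of a non-constant Nash algebraic function with a transcendental holomorphic map can perfectly well be entire and zero-free, so the Puiseux step has nothing to bite on. Concern~(b) is moot until (a) is resolved.

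The paper avoids this obstruction by a different device. Rather than continuing the transcendental compositions, it picks a transcendence basis $\varphi_1,\dots,\varphi_l$ of the field generated over the rational functions by all the $f_{\ell,j}$, and uses the implicit function theorem to write every $f_{\ell,j}(z)$ as a Nash algebraic function $\hat f_{\ell,j}(z,X)$ evaluated at $X=\Phi(z):=(\varphi_1(z),\dots,\varphi_l(z))$. The key step is then to show that the polarized identity persists when $X$ is treated as an \emph{independent} variable, i.e.\ that $\Psi(z,X,w)\equiv 1$; this is where the diastasis-type hypothesis (giving $\Psi(z,X,0)\equiv 1$) and the algebraic independence of the $\varphi_k$ (forcing $\partial_w^{\log}\Psi\equiv 0$, since any nontrivial algebraic relation would contradict it) are actually used. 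Once $X$ is free, every factor is genuinely Nash algebraic in $(z,X)$ for fixed $w$, and Lemma~\ref{lemnashexpr} applies directly --- no continuation of transcendental germs is needed. Your Puiseux endgame is correct, but it requires this algebraization step to become applicable.
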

\begin{proof}

Let us write 
\begin{equation}\label{egz}
\psi_k=\xi _k\left(f_{1}^{(k)},\dots,f_{m_k}^{(k)}\right)\in \F,\  k=0,\dots,s.
\end{equation}
We rename the functions involved  
\begin{equation}\label{eqfvarf}
\left(\varphi_1, \dots , \varphi_t\right)=\left(f_{1}^{(0)},\dots,f_{m_0}^{(0)},\dots, f_1^{(s)},\dots,f^{(s)}_{m_s}\right)
\end{equation}
and set
$$
S=\left\{\varphi_1, \dots , \varphi_t\right\}.
$$ 

Let $D$ be an open neighborhood  of the origin of $\C$ on which each $\varphi_j$,  $j=1, \dots ,t$, is defined.
Consider   the field  $\mathfrak R$ of rational function on $D$ and its  field extension 
$
\mathfrak F = \mathfrak R \left(S\right),
$
namely, the smallest subfield of the field of the  meromorphic functions on $D$, containing  rational functions and the elements of $S$. Let $l$ be the transcendence degree  of the field extension $\FF/\FR$. 
If $l=0$, then each element in $S$ is holomorphic  Nash algebraic, hence $\psi_1, \dots, \psi_s$ would be constant functions by Lemma \ref{lemnashexpr}. Assume then that $l>0$.
Without loss of generality we can assume that  $\CG=\left\{\varphi_1,\dots,\varphi_l\right\}\subset S$  is a maximal algebraic independent subset over $\FR$. Then   there exist minimal polynomials $P_j\left(z, X,Y\right)$, $X=\left(X_1,\dots,X_l\right)$,  such that 
$$
P_j\left(z,\Phi(z), \va_j(z)\right)\equiv 0, \ \forall j=1, \dots ,t,
$$
where $\Phi(z)=\left(\va_1(z),\dots,\va_l(z)\right)$.

Moreover, by  the definition of minimal polynomial 
 $$
 \frac{\de P_j\left(z,X,Y\right)}{\de Y}\left(z,\Phi (z),\va_j(z)\right)\not\equiv 0, \ \forall j=1, \dots ,t.
 $$
  on $D$.
 Thus, by the algebraic version of the existence and uniqueness part of the implicit function theorem, there exist a connected open subset $U\subset D$ with $0\in \ov U$ and Nash algebraic functions 
 $\hat \va_j(z,X)$,  defined in a neighborhood $\hat U$ of $\left\{(z, \Phi(z)) \mid z \in U \right\}\subset \C^n \times \C^{l}$, such that
$$
\va_j(z)=\hat \va_j\left(z,\Phi(z)\right), \ \forall j=1, \dots ,t.
$$
for any $z\in U$.

Let us denote
$$
\left(\hat f_{1}^{(0)}(z, X),\dots,\hat f_{m_0}^{(0)}(z, X),\dots,\hat f_{1}^{(s)}(z, X),\dots,\hat f_{m_s}^{(s)}(z, X)\right)=(\hat\varphi_1 (z, X), \dots , \hat\varphi_t(z, X)),
$$ 
(notice that $\hat f_{i}^{(k)}(z, \Phi(z))= f_{i}^{(k)}(z)$ for all $k=0, \dots ,s$ and $i=1, \dots ,m_k$).
We define
$$
\hat F_k(z,X):=\left(\hat f_{1}^{(k)}(z,X),\dots,\hat f_{m_k}^{(k)}(z,X)\right),\ k=0, \dots ,s.
$$

Consider the function 
\begin{equation*}
 \Psi(z,X,w):=
\tilde\xi _0\left(\hat F_0(z,X), F_0(w) \right)\\
\tilde\xi _1\left(\hat F_1(z,X), F_1(w)\right)^{-c_1}\cdots\left(\tilde\xi _s\hat F_s(z,X), F_s(w)\right)^{-c_s},
\end{equation*}
where $\tilde\xi _j$ is the real analytic extension of $\xi _j$ in a \ngh\ of $(0,0) \in \C^{m_j} \times \operatorname {Conj} \C^{m_j}$ and $F_k(w)=\left(f_{1}^{(k)}(w),\dots,f_{m_k}^{(k)}(w)\right)$. 
By shrinking $U$ if necessary we can assume that $\Psi(z,X,w)$  is defined on  $\hat U \times U$.

We claim that $\Psi(z,X,w)$ is identically equal to one on this set.
Recalling that $\psi _k(z,w)=\tilde \xi _k \left(F_k(z),F_k(w)\right)$ is of diastasis-type, we see that 
$\tilde \xi _k (\hat F_k(z,X),F_k(0))=\psi _k(0)$,
 $k=0,\dots,s$. Since $0\in \ov U$, it follows by \eqref{eqnashexpr}
that  $\Psi(z,X,0)\equiv 1$.
 Hence, in order to prove the claim, it is enough to show that the logarithmic differentiation with respect $w$,
 namely 
 \begin{equation*}\begin{split} 
(\de^{\log}_w \Psi)(z,X,w)&=\frac{\de_w \tilde\xi _0\left(\hat F_0(z,X), F_0(w) \right)}{\tilde\xi _0\left(\hat F_0(z,X), F_0(w) \right)}\\
&- c_1 \frac{\de_w \tilde\xi _1\left(\hat F_1(z,X), F_1(w) \right)}{\tilde\xi _1\left(\hat F_1(z,X), F_1(w) \right)} - \dots - c_s \frac{\de_w \tilde\xi _s\left(\hat F_s(z,X), F_s(w) \right)}{\tilde\xi _s\left(\hat F_s(z,X), F_s(w) \right)}
\end{split}\end{equation*}
vanishes  for all $w\in U$.  Assume, by contradiction, that there exists $w_0\in U$ such that $(\de^{\log}_w \Psi)(z,X,w_0)\not\equiv 0$. Since $(\de^{\log}_w \Psi)(z,X,w_0)$ is Nash algebraic in $(z,X)$ there exists a holomorphic polynomial $P(z,X,t)=A_d(z,X)t^d+\dots+A_0(z,X)$ with $A_0(z,X)\not\equiv 0$ such that  $P(z,X,(\de^{\log}_w \Psi)(z,X,w_0))=0$. 
Since,  by \eqref{eqnashexpr}  and  \eqref{egz} we have
 $\Psi(z,\Phi (z),w)\equiv 1$, we get $(\de^{\log}_w \Psi)(z,\Phi (z),w)\equiv 0$. Thus  $A_0(z,\Phi (z))\equiv 0$
 which contradicts  the fact that $\va_1(z),\dots,\va_l(z)$ are algebraic independent over $\FR$. Hence $(\de^{\log}_w \Psi)(z,X,w)\equiv 0$ and the claim is proved.

Therefore
\begin{equation*}\begin{split} 
{\tilde\xi _0\left(\hat F_0(z,X), F_0(w) \right)} = 
\left(\tilde\xi _1\left(\hat F_1(z,X), F_1(w)\right)\right)^{c_1}\cdots \left(\tilde\xi _s\left(\hat F_s(z,X), F_s(w)\right)\right)^{c_s},
\end{split}\end{equation*}
for every $(z,X,w)\in \hat U \times U$. By fixing  $w\in U$ and  applying Lemma \ref{lemnashexpr} we deduce that $\psi_1, \dots, \psi_s$ are constant functions. The proof of the theorem  is complete.
\end{proof}

\section{The proofs of Theorem \ref{mainteor1} and Theorem \ref{mainteor2}}\label{proofs}
Given a   real analytic 
\K \ metric $g$  on  a complex manifold 
 $M$ one can introduce
in a neighborhood of a point 
$p\in M$,
a very special
\K\ potential
$D^g_p$ for the metric
$g$, 
the celebrated
{\em Calabi's diastasis function} (see \cite{Cal} or to \cite{LoiZedda-book}
for details). 
Among all the potentials the diastasis
is characterized by the fact that 
in every coordinate system 
$\{z_1, \dots , z_n\}$ centered at $p$
\begin{equation*}\label{eqcardi}
D^g_p(z, \bar z)=\sum _{|j|, |k|\geq 0}
a_{jk}z^j\bar z^k,
\end{equation*}
with 
$a_{j 0}=a_{0 j}=0$
for all multi-indices
$j$. Clearly the diastasis $D^g_p$ is  a function of diastasis-type as defined above.

The following two lemmata are crucial in both the  proofs of Theorem \ref{mainteor1} and Theorem \ref{mainteor2}.

 \begin{lem}\label{lemdiastflag}
Let $\mathcal C$ be a generalized  flag manifold of special type and $g_{\mathcal C}$ its homogeneous metric. Then around any point $p \in \mc$ there exists a system of coordinates, centered in the origin, such that the diastasis  $D^{g_{\mathcal C}}_0$ satisfies 
\begin{equation}\label {eqdiasflagf}
e^{D^{g_\mathcal C}_0}\in \wi{F}^{c_1}\cdots \wi{F}^{c_s}, \qquad c_1,\dots,c_s\in \R^+. 
\end{equation}
\end{lem}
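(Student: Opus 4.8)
The plan is to reduce the statement, in both cases of the definition, to the model situation on complex projective space, by exploiting two standard properties of Calabi's diastasis: its \emph{hereditary character} (if $\varphi\colon (M,g)\to (S,g_S)$ is a holomorphic isometry and $q=\varphi(p)$, then $D^g_p=D^{g_S}_q\circ\varphi$ on a \ngh\ of $p$, see \cite{Cal}) and its \emph{additivity} (if $g=g_1+g_2$ on $M$, then $D^g_p=D^{g_1}_p+D^{g_2}_p$, because the sum of two potentials having no non-constant purely holomorphic or antiholomorphic terms in their power series expansion is again such a potential, and the diastasis is characterised by this property; likewise $D^{\lambda g}_p=\lambda D^g_p$ for $\lambda\in\R^+$).

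First I would dispose of case (ii), when $\mathcal C$ is of integral type. By Remark \ref{remarproj} there are $\lambda\in\R^+$ and a holomorphic isometry $\varphi$ of $(\mathcal C,\lambda g_{\mathcal C})$ into $(\CP^N,g_{FS})$. Fix a coordinate system on $\mathcal C$ centered at $p$ and the affine chart of $\CP^N$ centered at $\varphi(p)$, and write $\varphi$ in these coordinates as $(f_1,\dots,f_N)$ with $f_j\in\OO_0$, $f_j(0)=0$. Since the diastasis of $(\CP^N,g_{FS})$ at $\varphi(p)$ equals, in that chart, $\log\bigl(1+\sum_{j=1}^N|w_j|^2\bigr)$ (up to the normalisation of $g_{FS}$), the hereditary character of the diastasis gives $\lambda D^{g_{\mathcal C}}_0(z,\bar z)=\log\bigl(1+\sum_{j=1}^N|f_j(z)|^2\bigr)$, hence $e^{D^{g_{\mathcal C}}_0}=\bigl(1+\sum_j|f_j|^2\bigr)^{1/\lambda}$. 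The function $\xi(w)=1+\sum_j|w_j|^2$ has real analytic extension $\tilde\xi(z,u)=1+\sum_j z_ju_j$, which is a polynomial and hence a holomorphic Nash algebraic function, so $\xi\in\mathcal N^N$; moreover $\xi$ is of diastasis-type, since its expansion in $w,\bar w$ contains no non-constant purely holomorphic or antiholomorphic term. Therefore $1+\sum_j|f_j|^2=\xi(f_1,\dots,f_N)\in\wi{\mathcal F}$, and \eqref{eqdiasflagf} holds with $s=1$ and $c_1=1/\lambda$.

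For case (i), $\mathcal C=G/P$ of classical type, I would use the explicit description of the invariant \K\ metrics: for each simple root complementary to $P$ there is an integral homogeneous \K\ metric $g_i=\varphi_i^*g_{FS}$ coming from a $K$-equivariant projective embedding $\varphi_i\colon\mathcal C\to\CP^{N_i}$ attached to a fundamental representation of $G$, and every homogeneous \K\ metric on $\mathcal C$ is a positive real combination $g_{\mathcal C}=\sum_{i=1}^r\lambda_ig_i$ of these, $\lambda_i\in\R^+$. Choosing a single coordinate chart on $\mathcal C$ centered at $p$ (and, for each $i$, the affine chart of $\CP^{N_i}$ centered at $\varphi_i(p)$), additivity of the diastasis gives $D^{g_{\mathcal C}}_0=\sum_{i=1}^r\lambda_iD^{g_i}_0$, while the computation of case (ii) shows $e^{D^{g_i}_0}=1+\sum_j|f^{(i)}_j|^2\in\wi{\mathcal F}$ for suitable $f^{(i)}_j\in\OO_0$ vanishing at $0$. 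Exponentiating, $e^{D^{g_{\mathcal C}}_0}=\prod_{i=1}^r\bigl(e^{D^{g_i}_0}\bigr)^{\lambda_i}\in\wi{F}^{\lambda_1}\cdots\wi{F}^{\lambda_r}$ with all $\lambda_i\in\R^+$, which is exactly \eqref{eqdiasflagf}.

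The additivity and hereditary character of the diastasis, the verification that $\xi(w)=1+\sum_j|w_j|^2$ meets both conditions defining $\wi{\mathcal F}$, and the compatibility of the various coordinate charts are routine. The only ingredient that is not a formal manipulation of the diastasis, and hence the main obstacle, is the structural fact used in case (i): that a classical homogeneous flag metric decomposes as a positive real combination of integral homogeneous metrics arising from genuine projective embeddings. This is precisely where the special-type hypothesis is used, the integral case being dealt with directly via Remark \ref{remarproj} as above.
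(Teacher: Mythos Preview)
Your treatment of the integral case (ii) is identical to the paper's: both pull back the Fubini--Study diastasis via the projective embedding of Remark \ref{remarproj}, obtaining $e^{D^{g_\mathcal C}_0}=(1+\sum_j|f_j|^2)^{c_1}\in\wi{\mathcal F}^{c_1}$.

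For the classical case (i) the two arguments diverge slightly, though they rest on the same underlying structure. The paper quotes \cite[Theorem~3.5]{lmzlog} (building on \cite{AlPer}) to obtain, in suitable Bochner coordinates, a formula $D^{g_\mathcal C}_0=\sum_{j=1}^s c_j\log h_j$ with the $h_j$ explicit \emph{polynomials}, hence manifestly Nash algebraic and of diastasis-type. You instead rebuild this decomposition from first principles, writing $g_{\mathcal C}=\sum_i\lambda_i\,\varphi_i^*g_{FS}$ as a positive combination over the simple roots complementary to $P$, and then applying the integral-type computation to each summand. Both arrive at the same product structure for $e^{D^{g_\mathcal C}_0}$; your route is more self-contained (it does not rely on the cited Bochner-coordinate result), while the paper's route gives the stronger information that the factors are polynomial, which is used later in the paper (see the manipulations around \eqref{meglio1}--\eqref{eqdias=1}).

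One small point of care in your case (i): the maps $\varphi_i\colon G/P\to\CP^{N_i}$ attached to single fundamental weights are generally not embeddings of $G/P$ itself but factor through the partial flag $G/P_i\supsetneq G/P$; consequently each $g_i=\varphi_i^*g_{FS}$ is only a semi-positive closed $(1,1)$-form, not a \K\ metric. This does not harm your argument---the function $\log(1+\|\varphi_i\|^2)$ is still a real-analytic potential with no pure holomorphic or antiholomorphic terms, so the sum $\sum_i\lambda_i\log(1+\|\varphi_i\|^2)$ is automatically the diastasis of $g_{\mathcal C}$---but you should phrase the ``additivity of the diastasis'' step accordingly, rather than invoking it as a statement about genuine metrics.
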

\begin{proof}
Assume that $\mathcal C$ is of classical type.  Given $p\in {\mathcal C}$ by  \cite[Theorem 3.5]{lmzlog},  there exists a system of Bochner coordinates 
$(z_1,\dots,z_n)$
centered at $p$ and dense in $\mathcal C$
such that the diastasis associated of  $g_\mc$  in a neighborhhood of $p$ reads as 
\begin{equation}\label{eqdiastflag0}
D^{g_\mathcal C}_0(z)=\sum_{j=1}^s c_j \log\left(h_j(z) \right)
\end{equation}
where  $c_1, \dots, c_s \in \R^+$  and $h_1, \dots , h_s$ are real analytic functions  (see \cite[Theorem 3.5]{lmzlog} for their explicit expression), the \K\ potential \eqref{eqdiastflag0} has been firstly constructed in \cite[Proposition 8.2]{AlPer} then in \cite[Theorem 3.5]{lmzlog} it has been  showed that it is a diastasis function. 
In \cite[pag. 9]{lmzlog} (see also \cite{lmzboch}) it is proven that the functions $h_1, \dots, h_s$ are polynomials.  Thus  \eqref{eqdiasflagf}
holds  true and by  \cite{AlPer} we see that $g_{\mathcal C}$ is integer if and only if $c_1, \dots, c_s \in \Z^+$.  Moreover,  
for any choice of $c_1, \dots, c_s \in \R^+$ in \eqref{eqdiastflag0} we obtain the diastasis function of a homogeneous metric on $\mc$. 
Assume  now that $\mathcal C$ is of integral type, so that there exists a \K\ immersion $F:\mc \to \C P^N$ such that $g_\mc =\lmb F^*g_{FS}$, for some $\lambda\in{\R^+}$ (cfr. Remark \ref{remarproj}).
Fix a system of coordinates $z_1,\dots,z_n$ for $\mc$ around $p$. From the hereditary property of the diastasis function, we get
$$
D^{g_\mathcal C}_0(z)=\lmb\log \left(1 + \left\|F(z)\right\|^2\right),
$$
where $\left\|F(z)\right\|^2=|f_1(z)|^2+\dots+|f_N(z)|^2 \in \wi{F}$ and $f_1, \dots, f_N$ are the component of $F$ written in the  affine coordinates of $\C P^N$ around $f(0)$. Clearly \eqref{eqdiasflagf} is satisfied with $s=1$ and $c_1=\lmb$ and $\omega_\mc$ is integral if and only if $c_1 \in \Z^+$. 
\end{proof}

\begin{lem}\label{lembis}\emph{(\cite[Proof of Theorem 1.1]{PRIMO})}
Let  $\left(\Omega, g_\Omega\right)$ be a homogeneous bounded domain. Consider its realization as a Siegel domain of $\C^n$, then the diastasis function for the metric $g_\Omega$  at given point $v\in\Omega$ is 
given by:
\begin{equation}\label{diastomega}
D^{g_\Omega}_v(u)=\sum_{k=1}^r\gamma_k
\log\left(\frac{F_{k}(u,\ov u)\hs F_{k}(v,\ov v)}{F_{k}(u,\ov v)\hs F_{k}(v,\ov u)}\right),
\end{equation}
where $\gamma_k$ are positive real numbers and  $F_k$ are non costant rational holomorphic functions, $k=1, \dots ,r$.  
In particular
\begin{equation}\label{eqdiashomf}
e^{D^{g_\Omega}_0(u)} \in \wi F^{\gamma_1} \cdots \wi F^{\gamma_r}.
\end{equation}
\end{lem}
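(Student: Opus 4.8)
The plan is to deduce the formula \eqref{diastomega} from the known explicit \K\ potential of a homogeneous bounded domain together with Calabi's polarization procedure, and then to read off \eqref{eqdiashomf} as an essentially formal consequence. First I would recall, as carried out in \cite{PRIMO}, that after realizing $(\Omega,g_\Omega)$ as a homogeneous Siegel domain in $\C^n$ the metric $g_\Omega$ admits a global \K\ potential of the product form
\[
\Phi(u,\ov u)=\sum_{k=1}^r\gamma_k\log F_k(u,\ov u),\qquad \gamma_k\in\R^+,
\]
where each $F_k$ is the reciprocal $1/\Delta_k$ of a non-constant irreducible relatively invariant polynomial $\Delta_k$ of the Siegel domain; thus the $F_k$ are non-constant rational holomorphic functions, $F_k(u,\ov u)>0$ on $\Omega$, and their sesquianalytic (polarized) extensions $F_k(u,\ov v)$ are holomorphic and nowhere zero for $u,v$ near any fixed point of $\Omega$. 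Possibly after an affine translation of the Siegel coordinates we may assume the chosen point is the origin, keeping the notation $F_k,\Delta_k$ for the resulting functions.

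Next I would invoke Calabi's construction of the diastasis from a \K\ potential: fixing $v\in\Omega$ and denoting still by $\Phi$ its sesquianalytic extension,
\[
D^{g_\Omega}_v(u)=\Phi(u,\ov u)+\Phi(v,\ov v)-\Phi(u,\ov v)-\Phi(v,\ov u).
\]
Indeed the right-hand side is a \K\ potential for $g_\Omega$, since it differs from $\Phi(u,\ov u)$ by the pluriharmonic (in $u$) function $\Phi(v,\ov v)-\Phi(u,\ov v)-\Phi(v,\ov u)$, and, by construction, in coordinates centered at $v$ its Taylor expansion carries no non-constant purely holomorphic or purely anti-holomorphic term; hence it is the diastasis. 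Substituting the product form of $\Phi$ and collecting logarithms then yields exactly \eqref{diastomega}.

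For \eqref{eqdiashomf} I would take $v=0$ in \eqref{diastomega}, so that $e^{D^{g_\Omega}_0(u)}=\prod_{k=1}^r h_k(u)^{\gamma_k}$ with $h_k(u)=\frac{F_k(u,\ov u)\,F_k(0,0)}{F_k(u,0)\,F_k(0,\ov u)}$, and it suffices to check that $h_k\in\wi{\mathcal F}$ for every $k$. Using $\ov{F_k(u,0)}=F_k(0,\ov u)$ (Hermitian symmetry of the polarization of the real function $\Delta_k$) and $F_k(u,\ov u),F_k(0,0)>0$, one gets $h_k=|F_k(u,0)|^{-2}F_k(u,\ov u)F_k(0,0)>0$, so $h_k$ is real valued; moreover $\log h_k$ is obtained from the real analytic function $\log F_k(u,\ov u)$ by removing precisely its purely holomorphic part $\log F_k(u,0)$, its purely anti-holomorphic part $\log F_k(0,\ov u)$ and the constant $\log F_k(0,0)$, so $\log h_k$ contains only mixed monomials $u^\alpha\ov u^\beta$ with $\alpha,\beta\neq 0$, whence $\log h_k$, and therefore $h_k$, is of diastasis-type; finally, taking $f_j(u)=u_j$ (so $f_j(0)=0$), the sesquianalytic extension of $h_k$ is the rational, hence holomorphic Nash algebraic, function $\frac{\Delta_k(z,0)\,\Delta_k(0,w)}{\Delta_k(z,w)\,\Delta_k(0,0)}$ of $(z,w)\in\C^n\times\C^n$. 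Thus $h_k=\xi_k(f_1,\dots,f_n)$ with $\xi_k\in\mathcal N^n$ of diastasis-type, i.e. $h_k\in\wi{\mathcal F}$, and \eqref{eqdiashomf} follows.

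The genuine content lies in the first step — the product expression of the homogeneous \K\ potential on a homogeneous Siegel domain, including the non-constancy of the $\Delta_k$, the positivity of the $\gamma_k$, and the global validity — which is exactly what is established in \cite{PRIMO}; the remaining steps are formal, the only delicate point being to keep track of the domains on which the polarized functions $F_k(u,\ov v)$ are defined and non-zero, which is guaranteed by the non-vanishing of the $\Delta_k$ on $\Omega$ together with continuity.
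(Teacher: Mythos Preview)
The paper does not give its own proof of this lemma; it is quoted directly from \cite[Proof of Theorem 1.1]{PRIMO}, so there is no in-paper argument to compare against. Your argument---take the product potential $\Phi=\sum_k\gamma_k\log F_k$ on the Siegel realization (this is the input from \cite{PRIMO}), apply Calabi's polarization $D^{g_\Omega}_v(u)=\Phi(u,\ov u)+\Phi(v,\ov v)-\Phi(u,\ov v)-\Phi(v,\ov u)$ to obtain \eqref{diastomega}, and then check that each factor $h_k=\dfrac{F_k(u,\ov u)\,F_k(0,0)}{F_k(u,0)\,F_k(0,\ov u)}$ has rational (hence Nash algebraic) sesquianalytic extension and is of diastasis-type---is correct and is the natural route one expects.

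One small remark on the last step: you take $f_j(u)=u_j$ with $u\in\C^n$, whereas the paper defines $\OO_0$ as the ring of germs at $0\in\C$ (one variable), so $\wi{\mathcal F}$ literally consists of functions of a single complex variable and \eqref{eqdiashomf} as written is a mild abuse of notation. It should be read either under the obvious $n$-variable extension of $\wi{\mathcal F}$ or, equivalently for every application in the paper, after composition with a holomorphic map $\varphi:(\C,0)\to(\Omega,0)$, the $f_j$ then being the components of $\varphi$. This is an ambiguity in the paper's own formulation rather than a gap in your argument, but it is worth making the intended reading explicit.
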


We are now in the position to prove Theorem \ref{mainteor1} and Theorem \ref{mainteor2}.

\begin{proof}[Proof of Theorem \ref{mainteor1}]
Let  $(g, X)$ be a KRS on  $M$ and let 
$\varphi:M\rightarrow \mc\times\W$ be a  holomorphic isometric immersion, i.e. $\varphi^*(g_\mathcal C\oplus g_\W)=g$, 
where $g_\mathcal C$ and  $g_\W$  are the homogeneous \K\ metrics on $\mathcal C$ and  $\W$ respectively.
Choose complex coordinates $\{z_1, \dots , z_n\}$ for $M$ centered at $p\in M$ where Calabi's diastasis function $D_p^g$ is defined.
By the hereditary property of the diastasis functiuon, we have that
\begin{equation}\label{eqdipr}
D^{g}_p(z)=D^{g_\mc}_{\varphi_\mc (p)}(\vf_\mc(z))+D^{g_\W}_{\varphi_\W (p)}(\vf_\W(z)),
\end{equation}
on a \ngh\ of $p$ and 
where $\varphi:=\left(\vf_\mathcal C,\vf_\W\right)$. From Lemma \ref{lemdiastflag} and \ref{lembis} we see that 
$$
e^{D^{g}_p(z)} \in \wi F^{c_1}\cdots \wi F^{c_s} \wi F^{\gamma_1}\cdots \wi F^{\gamma_r}.
$$
Thus, by \cite[Proposition  4.1]{PRIMO},  the metric $g$ is forced to be KE.
\end{proof}

\begin{proof}[Proof of (i) of Theorem \ref{mainteor2}]
Assume by contradiction that   $\mathcal E$ and $\mc\times\W$ are relatives.  Thus there exist  a \ngh\  $U \subset \C$ of the origin $0\in \C$ 
and two holomorphic immersions $\varphi_\mathcal E:U\to \mathcal E$ and 
$\varphi:U \to \mathcal C\times\W$  such that 
\begin{equation}\label{eqfond}
 \varphi_\mathcal E^* g_0=\varphi^* \left(g_\mathcal C\oplus g_\W \right), 
\end{equation}
 where $g_\mathcal C$ and  $g_\W$  are the homogeneous \K\ metrics on $\mathcal C$ and  $\W$ respectively and $g_\mathcal E$ is the flat metric on $\mathcal E$.  
 Since we are working locally we can assume that $\mathcal E$ is the complex Euclidean space $\C^m$, $g_\mathcal E$ is the flat metric $g_0$ on $\C^m$ and that $\varphi_\mathcal E(0)=0\in\C^m$, $m=\dim \mathcal E$.
 
 Write $\varphi:=(\varphi_\mathcal C,\varphi_\W)$ with  $\varphi_\mathcal C:U\rightarrow \mathcal C$ and $\varphi_\W: U\rightarrow\W$. Again from the hereditary property of the diastasis function we get
 $$
 D^{g_0}_{\varphi_\mathcal E(0)}(\varphi_\mathcal E(z)) = D^{g_\mc}_{\varphi_\mc(0)}(\varphi_\mc(z)) + D^{g_\W}_{\varphi_\W(0)}(\varphi_\W(z))
 $$  
 From Lemma \ref{lemdiastflag} and \ref{lembis} we see that 
$$
e^{D^{g_0}_{\varphi_\mathcal E(0)}(z)} \in \wi F^{c_1}\cdots \wi F^{c_s} \wi F^{\gamma_1}\cdots \wi F^{\gamma_r}.
$$
Thus,   by the trascendental properties of holomorphic Nash algebraic functions  proved in  \cite[Theorem 2.1]{PRIMO} we deduce  that $\varphi_\mathcal E$ is constant, in constrast with the hypothesis that $\varphi_\mathcal E$ is an immersion. The proof of (i) is completed.
\end{proof}

\begin{proof}[Proof of (ii) and (iii) of Theorem \ref{mainteor2}]
We start by proving (ii) and (iii) in the special case ${\mathcal E}$ is zero dimensional, namely we prove that ${\mathcal C}$
and $\Omega$ are not relatives. As we have already pointed out at in the introduction its proof 
strongly  relies on Theorem \ref{proprelnash}.

Let $U\subset \C$ be a \ngh\ of the origin and assume by contradiction that there exists two holomorphic immersions
$\varphi_\mathcal C:U \to \mathcal C$ 
and $\varphi_\W:U\to  \W$ such that 
\begin{equation}\label{eqbsthm12a}
\varphi_\mc^*g_\mc=\varphi_\W ^* g_\W,
\end{equation}
with $\varphi_\mathcal C:U\to \mathcal C $ and $\varphi_\W:U\to \W$. So that in a \ngh\ of $0\in \C$ we have 
$$
{D^{g_\mathcal C}_{\varphi_\mc(0)}(\varphi_\mc(z))}= {D^{g_\W}_{\varphi_\W(0)}(\varphi_\W(z))}.
$$
Let us pass to the coordinates centered at $\varphi_\mc(0)$ for $\mc$ given by Lemma \ref{lemdiastflag} and to the realization of $\W$ as a Siegel domain. 
From Lemma \ref{lemdiastflag} and Lemma \ref{lembis}  combined with \eqref{eqdiastflag0} and  \eqref{diastomega} we see that in a \ngh\ of $0\in \C$ there exist $\psi_1, \dots, \psi_{s}$, $\phi_{1}, \dots, \phi_{r}\in \wi F$ such that
\begin{equation}\label{meglio1}
{D_{\va_\mathcal C (0)}^{g_\mathcal C}(\va_\mathcal C (z))}=\log\left(\psi_1^{c_1} \cdots \psi_{s}^{c_s} \right)
\end{equation}
and
\begin{equation}\label{meglio2}
{D_{\va_\W (0)}^{g_\W}(\va_\W (z))}=\log\left(\phi_{1}^{\gamma_1}, \dots, \phi_{r}^{\gamma_r}\right).
\end{equation}
Up to  indexes order, we can assume that there exist $1\leq s'\leq s$, such that  $\left\{c_1, \dots, c_{s'}\right\}\subset \left\{c_1, \dots, c_{s}\right\}$ are maximal subsets of linearly independent real numbers over $\Q$. Thus for  suitable  coefficients $a_{jk}\in \Q$, we have 
\begin{equation}\label{eqcoefa}
c_{j}=\sum_{k=1}^{s'}a_{jk}c_k, \quad j=s'+1,\dots, s.
\end{equation}
Set 
\begin{equation}\label{eqtildepsi}
\tilde\psi_k:=\psi_k \psi_{s'\!+\!1}^{a_{s'\!+\!1,k}}\cdot\ldots\cdot \psi_{s}^{a_{sk}}, \quad k=1,\dots s'.
\end{equation}
Therefore we have
\begin{equation}\label{eqdias=1}
1= e^{D_{\va_\mathcal C (0)}^{g_\mathcal C}(\va_\mathcal C (z))-D_{\va_\W (0)}^{g_\W}(\va_\W (z))}
 =\tilde\psi_1^{c_1}\cdot \ldots \cdot \tilde\psi_{s'}^{c_{s'}} \cdot \phi_{1}^{-\gamma_1}\cdot \ldots \cdot \phi_{r}^{-\gamma_{r}}.
\end{equation}
Let us complete $\left\{c_1, \dots, c_{s'}\right\}$ to a maximal subset 
$$
\left\{c_1, \dots, c_{s'}, \gamma_1, \dots, \gamma_{r'}\right\}\subset \left\{c_1, \dots, c_{s'}, \gamma_1, \dots, \gamma_{r}\right\},\qquad 0\leq r' \leq r,
$$ 
of linearly independent real numbers over $\Q$
(with $r'=0$ we mean that $\left\{c_1, \dots, c_{s'}\right\}$ is already maximal). 
So that
$$
\gamma_j=\sum_{k=1}^{s'}b_{jk}c_k+\sum_{k=1}^{r'}t_{jk}\gamma_k,\qquad j=r'+1,\ldots,r 
$$
we can rewrite \eqref{eqdias=1} as follows
\begin{align*}
 1&=\left(\tilde\psi_1 \phi_{r'+1}^{-b_{r'+1, 1}}\cdots \phi_{r}^{-b_{r1}}\right)^{c_1}\dots \left(\tilde\psi_{s'} \phi_{r'+1}^{-b_{r'+1, {s'}}}\cdots \phi_{r}^{-b_{rs'}}\right)^{c_{s'}}\\
&\ \cdot\left(\phi_1 \phi_{r'+1}^{t_{r'+1, 1}}\cdots \phi_{r}^{t_{r1}}\right)^{-\gamma_1}\cdots\left(\phi_{r'} \phi_{r'+1}^{t_{r'+1, {r'}}}\cdots \phi_{r}^{t_{rr'}}\right)^{-\gamma_{r'}}
\end{align*}
By applying Theorem \ref{proprelnash}, we conclude that 
\begin{equation}\label{tildepsik1}
\tilde\psi_{k} =A_k\phi_{r'+1}^{b_{r'+1,k}}\cdot\ldots\cdot\phi_{r}^{b_{rk}}, \quad A_k \in \R, \quad k=1,\dots, s'.  
\end{equation}
and
\begin{equation}\label{tildepsik2}
\phi_{k} =B_k\phi_{r'+1}^{-t_{r'+1,k}}\cdot\ldots\cdot\phi_{r}^{-t_{rk}}, \quad B_k \in \R, \quad k=1,\dots, r'.  
\end{equation}
Let us choose $\tilde c_1,  \dots, \tilde c_{s'}\in \Z^+$ and $\tilde\gamma_1, \dots,\tilde \gamma_{r'}\in \R^+$ such that
\begin{equation}\label{tildegammaj}
\tilde\gamma_j=\sum_{k=1}^{s'}b_{jk}\tilde c_k+\sum_{k=1}^{r'}t_{jk}\tilde \gamma_k > 0,\qquad j=r'+1,\ldots,r  
\end{equation}
and
\begin{equation}\label{tildecj}
\tilde c _{j}:={a_{j 1}\tilde c_1+\dots+a_{j s'}\tilde c_{s'} }\in\Z^+, \quad j=s'+1,\dots, s,
\end{equation}
where the $a_{jk}$ are the coefficients appearing in \eqref{eqcoefa}.
Then  \eqref{tildepsik1}, \eqref{tildepsik2} and \eqref{tildegammaj} yield 
\begin{equation}\label{eqrelpsiphi}\begin{split} 
 & \tilde\psi_1^{\tilde c_1}\cdot \ldots \cdot \tilde\psi_{s'}^{\tilde c_{s'}}\cdot   \phi_1^{-\tilde \gamma_1}\cdot \ldots \cdot \phi_{r'}^{-\tilde \gamma_{r'}}\\
 =&\left(A_1\phi_{r'+1}^{b_{r'+1,1}}\cdot\ldots\cdot\phi_{r}^{b_{r1}}\right)^{\tilde c_1}
 \cdot \ldots \cdot
 \left(A_{s'}\phi_{r'+1}^{b_{r'+1,{s'}}}\cdot\ldots\cdot\phi_{r}^{b_{rs'}}\right)^{\tilde c_{s'}}\\
  \cdot&\left(B_1\phi_{r'+1}^{-t_{r'+1,1}}\cdot\ldots\cdot\phi_{r}^{-t_{r1}}\right)^{-\tilde \gamma_1}
 \cdot \ldots \cdot
 \left(B_{r'}\phi_{r'+1}^{-t_{r'+1,r'}}\cdot\ldots\cdot\phi_{r}^{-t_{rr'}}\right)^{-\tilde \gamma_{r'}}\\
=&C \phi_{r'+1}^{\tilde \gamma_{r'+1}}\cdot\ldots\cdot \phi_{r}^{\tilde \gamma_{r}},
\end{split}\end{equation}
for some $C \in \R$.
Using \eqref{eqtildepsi} and \eqref{tildecj}, we get
\begin{equation*}\begin{split} 
 \tilde\psi_1^{\tilde c_1}\cdot \ldots \cdot \tilde\psi_{s'}^{\tilde c_{s'}}
 &=\left(\psi_1 \psi_{s'+1}^{a_{s'\!+1\, 1}}\cdot\ldots\cdot \psi_{s}^{a_{s1}}\right)^{\tilde c_1}\cdot\ldots\cdot 
\left(\psi_{s'} \psi_{s'+1}^{a_{s'\!+1\, s'}}\cdot\ldots\cdot \psi_{s}^{a_{ss'}}\right)^{\tilde c_{s'}}\\
&=\psi_1^{\tilde c_1}\cdot\ldots\cdot\psi_{s'}^{\tilde c_{s'}}\cdot \psi_{s'+1}^{a_{s'\!+1\, 1}\tilde c_1+\dots+a_{s'\!+1\, s'}\tilde c_{s'} }
\cdot\ldots\cdot
\psi_{s}^{a_{s 1}\tilde c_1+\dots+a_{s s'}\tilde c_{s'} }\\
&=\psi_1^{\tilde c_1}
\cdot\ldots\cdot
\psi_s^{\tilde c_s}.
\end{split}\end{equation*}
The previous equation together with  \eqref{eqrelpsiphi}  yield
\begin{equation*}\begin{split} 
 \de\deb \log \left( 
\psi_1^{\tilde c_1}
\cdot\ldots\cdot
\psi_s^{\tilde c_s}
\right)
=\de\deb \log \left(
\tilde\psi_1^{\tilde c_1}\cdot \ldots \cdot \tilde\psi_{s'}^{\tilde c_{s'}}
\right)= \de\deb \log\left(\phi_{1}^{\tilde \gamma_1}\cdot\ldots\cdot \phi_{r}^{\tilde \gamma_{r}}\right).
\end{split}\end{equation*}

From this equation we deduce 
that $\mc$ equipped with the  homogeneous metric $\tilde g_\mc$  whose diastasis is 
obtained by replacing in \eqref{meglio1} the constants $c_1, \dots, c_s$   with  the positive integers $\tilde c_1,\dots, \tilde c_s$
and  $\W$ equipped with the metric $\tilde g_\W$  whose diastasis is 
obtained by replacing in  \eqref{meglio2}  $\gamma_1, \dots, \gamma_r$ with  $\tilde \gamma_1, \dots, \tilde  \gamma_r$,
are relatives. Since the coefficients $\tilde c_1,\dots, \tilde c_s$ are positive integers it follows  that the metric  $\tilde g_\mc$ is projectively induced (cfr. Remark \ref{remarproj} above) contradicting  \cite{Mossa} where it is shown  that a homogeneous bounded  domain and a projective manifold cannot be  relatives.

We can now prove (ii) of Theorem \ref{mainteor2} (the proof of (iii)  is omitted since it follows the same pattern).
Assume by contradiction that there exist two holomorphic maps
$\varphi_\mathcal C:U \to \mathcal C$ 
and $\varphi=\left(\varphi_\mathcal E,\varphi_\W \right):U\to \mathcal E \times  \W$ 
such that 
$$
\varphi_\mc^*g_\mc=\varphi ^*(g_\mathcal E \oplus g_\W), 
$$
where $U \subset \C$ is a \ngh\ of the origin.
By arguing as in proof of  part (i), we see that the previous equation yields 
$$
e^{D^{g_0}_{\varphi_\mathcal E(0)}(z)} \in \wi F^{c_1}\cdots \wi F^{c_s} \wi F^{-\gamma_1}\cdots \wi F^{-\gamma_r}.
$$
Again by  \cite[Theorem 2.1]{PRIMO} we see that $\varphi_\mathcal E$ is constant,  and so 
$\varphi_\mc ^*g_\mc = \varphi^*_\W g_\W$, 
showing that ${\mathcal C}$ and $\Omega$ are relatives, in contrast with the first part of the proof.
\end{proof}

\end{document}